\documentclass[12pt]{article}
\usepackage[reqno]{amsmath}
\usepackage{amssymb, amsthm, enumerate}
\usepackage{graphicx}
\usepackage{fullpage}
\usepackage{tikz}
\usetikzlibrary{decorations.markings}
\usepackage{array,booktabs,hhline}
\usepackage{float}
\usepackage{wrapfig}
\usepackage{url,hyperref}
\usepackage{longtable}

\usepackage[backend=biber,style=numeric,doi=false,isbn=false,url=false,hyperref=true,backref=true,maxbibnames=10]{biblatex}
\bibliography{qwalk.bib}

\expandafter\ifx\csname pdfpagebox\endcsname\relax\else
\pdfpagebox5
\fi

\makeatletter
\newtheorem*{rep@theorem}{\rep@title}
\newcommand{\newreptheorem}[2]{%
\newenvironment{rep#1}[1]{%
 \def\rep@title{#2 \ref{##1}}%
 \begin{rep@theorem}}%
 {\end{rep@theorem}}}
\makeatother

\newtheoremstyle{plainsl}%
	{\topsep}
	{\topsep}
	{\slshape} 
	{}
	{\normalfont\bfseries}
	{.}
	{ }
	{}

\swapnumbers

{\theoremstyle{plainsl}
\newtheorem{theorem}{Theorem}[section]
\newtheorem{lemma}[theorem]{Lemma}
\newtheorem{openprob}{Open Problem}[section]
\newtheorem{corollary}[theorem]{Corollary}}
{\theoremstyle{remark}
}

\newreptheorem{theorem}{Theorem}

\newcommand\cref[1]{Corollary~\ref{cor:#1}}

\def\proof{\noindent{{\sl Proof. }}}
\def\sqr#1#2{{\vbox{\hrule height.#2pt
    \hbox{\vrule width.#2pt height#1pt \kern#1pt
        \vrule width.#2pt}\hrule height.#2pt}}}
\def\eqed{\sqr53}
\def\qed{%
    \ifmmode\eqno\eqed
    \else\nobreak\ \hfill\eqed\medbreak\fi}

\newcommand\cM{{\mathcal M}}

\newcommand\Zv{{\mathbf v}}
\newcommand\Ze{{\mathbf e}}
\newcommand\ee{{\mathrm e}}
\newcommand\ii{{\mathrm i}}

\newcommand\cx{{\mathbb C}}

\newcommand\re{{\mathbb R}}

\newcommand\comp[1]{{\mkern2mu\overline{\mkern-2mu#1}}}

\newcommand\pmat[1]{\begin{pmatrix} #1 \end{pmatrix}}

%
%

\newcommand\mxm{\widehat{M}}

\date{April 2, 2024}
\title{Selected Open Problems in \\Continuous-Time Quantum Walks}

\author{Gabriel Coutinho\thanks{Universidade Federal de Minas Gerais, Belo Horizonte, Brazil. \protect\url{gabriel@dcc.ufmg.br}} \and Krystal Guo\thanks{Korteweg-de Vries Institute for Mathematics, University of Amsterdam, Amsterdam, The Netherlands. QuSoft (Research center for Quantum software \& technology), Amsterdam, The Netherlands. \protect\url{k.guo@uva.nl}}}

\begin{document}
\maketitle

\begin{abstract}
Quantum walks on graphs are fundamental to quantum computing and have led to many interesting open problems in algebraic graph theory. This review article  highlights three key classes of open problems in this domain; perfect state transfer, instantaneous uniform mixing, and average mixing matrices. In highlighting these open problems, our aim is to stimulate further research and exploration in this rapidly evolving field.
\end{abstract}

\section{Introduction}\label{sec:intro}


Quantum walks on graphs are essential for quantum computing. Amongst other important advances, quantum walks were shown to be a  universal computational primitive in a landmark result of Childs in \cite{ChildsUniversalQComputation}. They have also gained prominence in algebraic graph theory; many problems arising in the study of quantum walks have given rise to interesting combinatorial problems. There are many open problems in this rapidly evolving field, and in this review article, we highlight three classes of problems, place them in context with relevant results and propose several unsolved problems. 

Most of the results stated in this text are either not new or follow easily from known facts, but there are some new theorems. These are not the  focus of this paper; our main goal is to survey the basics and propose problems we believe are central to the field. 
We refer the reader to \cite{coutinho2018continuous} for a full survey of the basic result. For the purposes of this paper, we give a very brief introduction, as follows. 

The \textit{transition matrix} of a \textsl{continuous-time quantum walk} on a graph $G$ with adjacency matrix $A$ is a matrix-valued function in time, denoted $U(t)$, and is given as follows:
\[ U(t) = \exp(\ii tA).
\]
The quantum state of the quantum system defined on the graph at time $t$ is given by $U(t)$ applied to the vector describing the initial state. Typically we will require the initial state or the target state to be either $\Ze_u$, the characteristic vector of a vertex $u \in V(G)$, or for it to have entries of constant absolute value.

The three classes of problems are discussed are summarized here. 

\begin{enumerate}[(1)]
    \item State transfer occurs when the state at one vertex is transferred to another vertex, either perfectly or approximately. Its study is partially motivated by the No-Cloning Theorem, which is a seminal result in quantum mechanics which forbids the duplication of an arbitrary unknown quantum state. Many aspects of state transfer have been studied, see for instance \cite{ChrDatEke2004, KayReviewPST,God2010,CoutinhoGodsilGuoVanhove2,CouGuovBo2017,KemLipTau2017, GodGuoKem2020} for a very limited sample of papers on the topic.
    \item Instantaneous uniform mixing occurs when all entries of $U(t)$ have the same absolute value. In layman's terms, no matter where you start your quantum walk, after this special time $t$, the probability of being anywhere is constant. This has applications in entanglement generation and state preparation, and has been studied in \cite{adamczak2007non,GodsilMullinRoy,AdaChanComplexHadamardIUMPST}. We should warn that uniform mixing is a considerably less understood concept than state transfer.
    \item We also consider the average of $M(t)$ on the real line (yes, this is ill-defined, but we consider the limit of the uniform averages of $M(t)$ on arbitrarily long intervals, and this is allowed). This matrix satisfies several pleasant linear algebraic properties, and some of these connect very naturally to the combinatorics of the graph. It has been studied in notably in \cite{adamczak2003note,GodsilAverageMixing, CouGodGuo2018,GodGuoSin2018}.
\end{enumerate}


\section{Economic perfect state transfer}


If we think of edges in a graph as corresponding to interactions between qubits, then, for the purposes of implementation, it would be more economical to minimize the number of edges, while preserving the behaviour. In this vein, one might ask for the graph on $n$ vertices with the least number of edges such that there is perfect state transfer between some two vertices. This question was first asked by Godsil in \cite{GodsilPeriodicGraphs11} but remains open and has motivated other works with partial results \cite{coutinho2019quantum,kay2018perfect}. 

Henceforth in this paper, we will denote the spectral decomposition of the adjacency matrix $A(G)$ by
\[
A = \sum_{i = 0}^{d} \theta_r E_r.
\]

\subsection{Lower bound}

Formally, perfect state transfer occurs at a $G$ between vertices $u$ and $v$ at time $t$ if there is a complex number $\gamma$ of absolute value $1$ so that
\[
U(t) \Ze_u = \gamma \Ze_v,
\]
where $\Ze_u$ and $\Ze_v$ are the characteristic vectors of these vertices. It is easy to verify that if perfect state transfer occurs, then for all $r$, $E_r \Ze_u = \pm E_r \Ze_v$. Whenever these vectors are nonzero, and depending on these signs, $\ee^{\ii \theta_r t} = \pm \gamma$, which leads to a strong restriction on the $\theta_r$. In particular,
\[
t (\theta_r - \theta_s) = k_{rs} \pi,
\]
where $k_{rs}$ is an integer of suitable parity. We refer the reader to \cite{coutinho2018continuous} for a more developed introduction to the topic. 

In fact, \cite[Theorem 14]{coutinho2023spectrum} establishes that the minimum time perfect state transfer occurs is at most $\pi / \sqrt{2}$, therefore for those eigenvalues $\theta_r$ for which $E_r \Ze_u \neq 0$, the difference between any two of them is at least $\sqrt{2}$. 

Following the steps in \cite{coutinho2019quantum}, the number of distinct eigenvalues for which $E_r \Ze_u \neq 0$ upper bounds $(D/2)+1$, where $D$ is the diameter of the graph, and they are all separated by at least $\sqrt{2}$. The sum of the squares of all eigenvalues of the graph is $2m$, where $m$ is number of edges, and therefore an upper bound to the diameter can be obtained in terms of the number of edges. The result in \cite{coutinho2019quantum} is stated for periodic vertices and only assumes the eigenvalues are separated by $1$. Recalling that if perfect state transfer occurs, then at double the time there is periodicity, and using the recent result that improves separation to $\sqrt{2}$, the same steps of \cite{coutinho2019quantum} result in the following bound for the diameter $D$ of graphs on $m$ edges that admit perfect state transfer:
\[
D^3 \leq 80 m.
\]
The obvious immediate problem is to improve this.
\begin{openprob} \label{lowerbound}
   If perfect state transfer occurs in a graph with $m$ edges of diameter $D$, show that
   \[
    m \in \Omega(D^k),
   \]
   for $k > 3$.
\end{openprob}
The result in \cite{coutinho2019quantum} mentioned above does not exploit the fact that vertices involved in perfect state transfer are strongly cospectral. The theory developed in \cite{coutinho2023spectrum} might provide some quantitative ways of exploiting this fact.

Standard eigenvalue bounds for the diameter, for instance the well-known results in \cite{chung1989diameters}, do not seem to be particularly useful in our context, however we have no strong reason to believe they are not.

The strongest reason we have to believe it is possible to provide some improvement in Problem~\ref{lowerbound} is the fact that our best examples are exponentially large in terms of $D$, as we expose in the next section. The strongest reason we have to believe it might not be possible to improve the cubic bound is the following family of graphs, found by Dragan Stevanović (private communication):
\begin{figure}[H] 
    \begin{center}
    \includegraphics[scale=1]{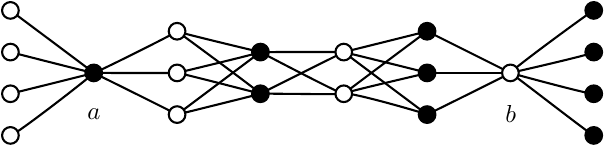}
    \caption{One specimen of a family found by Dragan Stevanović.}
    \end{center}
\end{figure}

Hopefully the white-black color coding indicates how to generate the rest of the family straightforwardly. Vertices $a$ and $b$ are strongly cospectral, and the eigenvalues for which $E_r \Ze_a \neq 0$ are the integers in $[-n,n]$, except for $0$, where $n$ is the number of vertices in the first layer. In a sense, they almost admit perfect state transfer between them, but they do not, for any $n$. And the number of edges in this family grows cubic in the diameter, as can be easily verified.

\subsection{Upper bound}

The standard examples of graphs admitting perfect state transfer at arbitrarily large distances are the hypercubes $Q_n$ (see for instance \cite{ChristandlPSTQuantumSpinNet2}), which we like to view as the iterared Cartesian product of the graph $P_2$ with itself. Here when the diameter is $D$, the number of vertices is $2^D$ and the number of edges is $D \times 2^{D-1}$. Iterated products of $P_3$ with itself also provide an infinite family slightly sparser: $3^D$ vertices, $D \times 3^{D/2 -1}$ edges (the relevant result here is that when $G$ admits perfect state transfer at time $t$, then any Cartesian power of $G$ also does, and at the same time.)

An operation inspired by the works in \cite{fiol1996locally,bachman2011perfect} allows for some local improvement. Here we apply the well-known theory of equitable partitions (see \cite[Chapter 9]{GodsilRoyle} for instance) to quantum walks, with a twist.

If $P$ the characteristic matrix of a partition $\pi$ of the vertex set of $G$, then $\pi$ is equitable when the column space of $P$ is $A$-invariant, where $A = A(G)$. In this case, there is a square matrix $B$ so that
begin
\[
    A P = PB.
\]
whence $\exp(\ii t A)P = P \exp(\ii t B)$. If the columns of $P$ are normalized, then $P^T A P = B$, so $B$ is symmetric, and $\exp(\ii t B)$ is the transition matrix of the quantum walk defined on the weighted graph whose vertex set corresponds to the rows and columns of $B$. If columns $i$ and $j$ of $P$ are equal to the characteristic vectors of two vertices from $G$, say $\Ze_u$ and $\Ze_v$, then it follows that there is perfect state transfer between $u$ and $v$ in $G$ if and only if there is perfect state transfer between $i$ and $j$ in the graph defined by $B$. This is essentially a result from \cite{bachman2011perfect}. The twist now comes from the fact that $P$ needs not be the normalized characteristic matrix of a partition. Any matrix with orthonormal columns will do, so long as its column space is $A$-invariant, and two of its columns are equal to $\Ze_u$ and $\Ze_v$. The concept of pseudo-equitable partition \cite{fiol1996locally} comes in handy, as the columns of $P$ still carry some combinatorial meaning.

\begin{theorem} \label{thm:quotient}
    Let $B$ be a symmetric matrix so that for some time $t$ and $\lambda\in \cx$, 
    \[
        \exp(\ii t B) \Ze_i = \lambda \Ze_j.
    \]
    Let $P$ be the characteristic matrix of a partition of $V(G)$, with $u,v \in V(G)$, and $D$ a diagonal matrix, so that $P\Ze_i = \Ze_u$, $P\Ze_j = \Ze_v$, and $D_{uu} = D_{vv} = 1$. Assume the column space of $DP$ is $A$-invariant, let $S$ be obtained from $DP$ upon normalizing its columns, and finally assume that $AS = SB$. Then $G$ admits perfect state transfer between $u$ and $v$.
\end{theorem}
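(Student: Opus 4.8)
The plan is to show that $\exp(\ii t A)$, restricted to the relevant columns, reproduces the transfer happening in $B$, using the intertwining relation $AS = SB$. First I would record the consequence of $AS = SB$: since $A$ and $B$ are symmetric and $S$ has orthonormal columns, we get $\exp(\ii t A) S = S \exp(\ii t B)$ for all $t$, by expanding the exponential as a power series and using $A^k S = S B^k$ inductively. Next I would identify the columns of $S$ corresponding to $i$ and $j$. Because $P\Ze_i = \Ze_u$ is already a $0/1$ vector supported on the single vertex $u$, and $D_{uu} = 1$, the $i$-th column of $DP$ is exactly $\Ze_u$, which already has norm $1$; hence normalizing does nothing to that column and $S\Ze_i = \Ze_u$. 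Similarly $S\Ze_j = \Ze_v$.

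With these two facts in hand, the computation is immediate: apply the intertwining identity to $\Ze_i$ to get
\[
\exp(\ii t A)\,\Ze_u = \exp(\ii t A)\, S\Ze_i = S\,\exp(\ii t B)\,\Ze_i = S\,(\lambda \Ze_j) = \lambda\, S\Ze_j = \lambda\, \Ze_v.
\]
It remains only to check that $|\lambda| = 1$, which follows because $\exp(\ii t B)$ is unitary (as $B$ is real symmetric) and $\Ze_i$, $\Ze_j$ are unit vectors, so $\|\exp(\ii tB)\Ze_i\| = 1$ forces $|\lambda| = 1$. This exactly matches the definition of perfect state transfer between $u$ and $v$ in $G$ at time $t$, with phase $\gamma = \lambda$.

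The only genuinely delicate point is the bookkeeping around the normalization step: one has to make sure that the columns of $DP$ indexed by $i$ and $j$ are unchanged by normalization, which is precisely what the hypotheses $P\Ze_i = \Ze_u$, $P\Ze_j = \Ze_v$ (columns are characteristic vectors of singletons, since $P$ is a partition matrix and these columns are actual vertex characteristic vectors), together with $D_{uu} = D_{vv} = 1$, are designed to guarantee. Everything else is the standard equitable-partition argument transported through the diagonal rescaling $D$; the $A$-invariance of the column space of $DP$ is what makes $AS = SB$ a legitimate hypothesis rather than an accident, and it is the reason the pseudo-equitable viewpoint of \cite{fiol1996locally} is the right framework here.
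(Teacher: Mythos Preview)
Your proof is correct and follows exactly the paper's approach: derive $\exp(\ii t A) S = S \exp(\ii t B)$ from $AS = SB$, then apply both sides to $\Ze_i$ and use $S\Ze_i = \Ze_u$, $S\Ze_j = \Ze_v$. You have simply filled in details the paper leaves implicit (the normalization bookkeeping showing $S\Ze_i = \Ze_u$ and the verification that $|\lambda|=1$).
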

\proof
    From $AS = SB$ we have
    \[
        \exp(\ii t A) S = S \exp(\ii t B).
    \]
    Multiplying both sides by $\Ze_i$, we have
    \[
        \exp(\ii t A) S \Ze_i = \lambda S \Ze_j \implies \exp(\ii t A) \Ze_u = \lambda \Ze_v.
    \]
\qed

This simple proof hides a procedure to construct new graphs admitting perfect state transfer, based on hypercubes, but with fewer vertices and edges, which we exemplify below for $Q_4$:

\begin{figure}[H] 
    \begin{center}
    \includegraphics[scale=1]{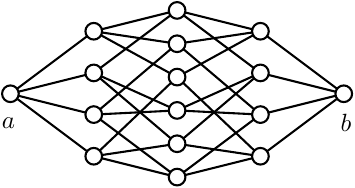}
    \caption{The graph $Q_4$.}
    \end{center}
\end{figure}

As we commented, there is perfect state transfer between $a$ and $b$. The distance partition with respect to $a$ is equitable. Assume its characteristic matrix is $P_1$. The symmetric matrix $B$ which represents the action of $A(Q_4)$ onto the columns of its normalized partition matrix $S_1$ is the adjacency matrix of the following weighted path:
\begin{figure}[H] 
    \begin{center}
    \includegraphics[scale=1]{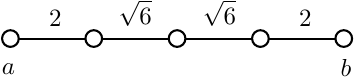}
    \caption{The symmetric quotient of the distance partition in $Q_4$.}
    \end{center}
\end{figure}
Again, there is perfect state transfer between $a$ and $b$ (simply apply Theorem~\ref{thm:quotient} with $P_1 = P$, $S_1 = S$ and $D = I$). Consider now the graph $G$ below:
\begin{figure}[H] 
    \begin{center}
    \includegraphics[scale=1]{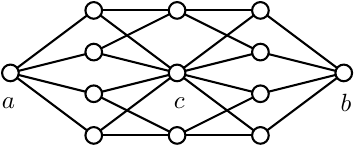}
    \caption{The graph $G$ which is a sort of modified $Q_4$, obtained upon ``compressing" four vertices.}
    \end{center}
\end{figure}
If $D$ is the diagonal matrix that assigns $1$ to all vertices and $2$ to vertex $c$, and if $P_2$ is the characteristic matrix of the distance partition of $a$, then the column space of $DP_2$ is $A$-invariant. If $S_2$ is obtained upon normalizing the columns of $DP_2$, then it is a straightforward calculation to verify that the same matrix $B$ as above represents the action of $A(G)$ onto the columns of $S_2$. Thus Theorem~\ref{thm:quotient} applies, and $G$ admits perfect state transfer between $a$ and $b$.

There is a combinatorial interpretation of a partition being equitable. It consists of verifying that the out degree of every vertex from class $i$ to class $j$ is constant, depending only on $i$ and $j$. It is straightforward to verify that the distance partition of $a$ in $G$ above also satisfies this regularity, except that edges arriving at $c$ should be counted with weight $2$, and edges departing $c$ should be counted with weight $1/2$. This combinatorial interpretation was heavily exploited in \cite{kay2018perfect} to generate modified versions of several Cartesian powers of $P_2$ and $P_3$, each with significantly fewer vertices and edges than the original. The procedure however did not lead to an infinite family whose size of each graph was bounded by a polynomial in the diameter. 

It is plausible that there is a subfamily of Cartesian powers of $P_2$ for which this compressing procedure can be described in such a way that the resulting graphs are bounded 
by a polynomial in the diameter. 

\begin{openprob}
    Find an infinite family of graphs for which there is perfect state transfer, and the size of the graph is bounded by a polynomial in the diameter.
\end{openprob}

\subsection{Bonus problems related to state transfer}

Godsil and Smith \cite{godsil2024strongly} provide an extensive theory of strongly cospectral vertices. The standard characterization says that two vertices are strongly cospectral if and only if they are cospectral and parallel. Vertices $u$ and $v$ are cospectral if, for any fixed $k$, the number of closed walks of length $k$ starting and ending in $u$ is the same as the number for $v$. It is fair to say this is a combinatorial characterization of cospectrality.
\begin{openprob}
    Find a combinatorial characterization of strong cospectrality.
\end{openprob}

It could be that there is no perfect state transfer between $u$ and $v$, which would correspond to $|\exp(\ii t A)_{uv}|$ being equal to $1$ for some $t$, but there could be distinct values of $t$ for which the sequence $(|\exp(\ii t_j A)_{uv}|)_{j \geq 0}$ converges to $1$. When this occurs, we say that pretty good state transfer happens in $G$ between $u$ and $v$. One of the results in \cite{coutinho2023irrational} says that it is possible to verify whether a given graph admits pretty good state transfer, but it does not lead to a procedure to compute the sequence of times $(t_j)$. It is also remarkable that perfect state transfer can be tested in polynomial time, whereas the algorithm in \cite{coutinho2023irrational} presents exponential complexity.

\begin{openprob}
    Find a polynomial time algorithm to test whether pretty good state transfer occurs, and find an algorithm to compute a subsequence of times which represent pretty good state transfer.
\end{openprob}

\section{Uniform mixing}

There is a surprising connection between uniform mixing and perfect state transfer. 
If the vertices of a graph $G$ can be partitioned into pairs such that there is perfect state transfer between each pair at some time $\tau$, then the transition matrix at that time, $U^{\tau}$, must be a permutation matrix and thus have eigenvalues which are roots of unity. In all known cases of uniform mixing, the eigenvalues of $U^{\tau}$, where $\tau$ is the time of instantaneous uniform mixing, are also roots of unities, but there is no simple combinatorial explanation. This is the topic of our next open problem.


Let $G$ be a graph with adjacency matrix $A$. Recall that the transition matrix of the continuous-time quantum walk on $G$ is given by 
\[
U(t) =  \exp{\ii tA}
\]
and the mixing matrix at time $t$ is given by 
\[
M(t) = U(t) \circ \comp{U(t)} = U(t) \circ U(-t).
\]
We say that $G$ admits \textsl{instantaneous uniform mixing} (or simply \textsl{uniform mixing}) at time $\tau$ if all entries of $M(\tau)$ are equal. Equivalently, $G$ admits  uniform mixing at time $\tau$ if all entries of $U(\tau)$ have the same absolute value.


\begin{openprob}
    If $G$ admits uniform mixing at time $\tau$, then the eigenvalues of $U(\tau)$ are roots of unity. 
\end{openprob}


Uniform mixing has been studied in special graph classes, including  in cycles
\cite{adamczak2007non},  in circulant graphs  \cite{TamonAhmadiUniformMixingCirculants, adamczak2003note} and in association schemes in \cite{AdaChanComplexHadamardIUMPST, GodsilMullinRoy} 

\subsection{Uniform mixing in association schemes}


We consider a continuous-time quantum walk on $G$, a distance regular graph of diameter $d$. Instead of looking at the eigenvalues of $U(t)$, we can look at the mixing matrix 
\[
M(t) = U(t) \circ \comp{U(t)}
\]
which is symmetric and doubly-stochastic with non-negative real entries. We can give an explicit formula for the eigenvalues of $M(t)$ in terms of the $P$ and $Q$ matrices of the distance regular graph. 


Recall that since $\{E_i\}_{i=0}^d$ and $\{A_i\}_{i=0}^d$ are both bases of $\cM$, there exists change of bases matrices between them. The \textsl{eigenmatrices} of the association scheme are  $(d+1) \times (d+1)$ matrices $P$ and $Q$ such that
\[
A_j = \sum_{i = 0}^d P_{ij}E_i \text{ and } E_j = \frac{1}{n} \sum_{i=0}^d Q_{ij} A_i,
\]
where $n$ is the number of vertices.
Note that this implies that $\{P_{ij}\}_{i=0}^d$ are the eigenvalues of $A_j$.

\begin{lemma}\label{lem:eigs} If $G$ is a distance-regular graph of diameter $d$ and $M(t)$ is the mixing matrix, then
\begin{equation}\label{eigM}
 M(t) =  \sum_{\ell = 0}^d \frac{1}{n^2} \left(\sum_{s,r,r'}  \ee^{\ii tP_{r1}-itP_{r'1}}  Q_{sr}  Q_{sr'}P_{\ell s}  \right)   E_{\ell} 
\end{equation}
for $\ell = 0, \ldots, d$. 
\end{lemma}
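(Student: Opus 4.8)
The plan is to expand $M(t) = U(t) \circ \comp{U(t)}$ by first writing $U(t)$ in the scheme's bases and then re-expressing everything back in the $\{E_\ell\}$ basis. First I would start from the spectral decomposition $A = \sum_r P_{r1} E_r$ (recalling that the $P_{r1}$ are exactly the eigenvalues of $A = A_1$), so that
\[
    U(t) = \exp(\ii t A) = \sum_{r=0}^d \ee^{\ii t P_{r1}} E_r,
    \qquad
    \comp{U(t)} = \sum_{r'=0}^d \ee^{-\ii t P_{r'1}} E_{r'}.
\]
The next step is to convert these to the $\{A_i\}$ basis using $E_r = \tfrac1n \sum_s Q_{sr} A_s$, giving $U(t) = \tfrac1n \sum_s \big(\sum_r \ee^{\ii t P_{r1}} Q_{sr}\big) A_s$ and similarly for $\comp{U(t)}$ with index $s'$ and $r'$. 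Since the $A_s$ are $01$-matrices with disjoint supports that partition the all-ones matrix, the Schur product collapses diagonally: $A_s \circ A_{s'} = \delta_{s s'} A_s$. Hence
\[
    M(t) = \frac{1}{n^2} \sum_{s} \left( \sum_{r, r'} \ee^{\ii t P_{r1} - \ii t P_{r'1}} Q_{sr} Q_{sr'} \right) A_s .
\]

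Finally I would translate this back into the idempotent basis via $A_s = \sum_\ell P_{\ell s} E_\ell$, substitute, and exchange the order of summation to collect the coefficient of each $E_\ell$; this yields exactly equation \eqref{eigM}. Because the $E_\ell$ are the orthogonal projections onto the eigenspaces, the bracketed quantities are then precisely the eigenvalues of $M(t)$, which is the content of the lemma.

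I do not expect any genuine obstacle here: every step is a basis change within the Bose--Mesner algebra $\cM$, and the only thing to be careful about is bookkeeping of the four summation indices $s, s', r, r'$ and the fact that the Schur product forces $s = s'$ (this is where we use that $G$ is distance-regular, so that $\{A_i\}$ really is a basis of an association scheme closed under Schur multiplication). A minor point worth stating explicitly is that $M(t)$ does indeed lie in $\cM$ — which is what makes the expansion in $\{E_\ell\}$ legitimate — and this follows since $U(t), \comp{U(t)} \in \cM$ and $\cM$ is Schur-closed. One could alternatively phrase the computation entirely in terms of $Q$ and the dual structure, but the route above keeps the algebra shortest.
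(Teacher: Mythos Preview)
Your proposal is correct and follows essentially the same route as the paper: write $U(t)$ and $\comp{U(t)}$ spectrally, pass to the $\{A_s\}$ basis via $Q$, use $A_s\circ A_{s'}=\delta_{ss'}A_s$ to compute the Schur product, and then return to the $\{E_\ell\}$ basis via $P$. The only difference is expository---you make the Schur collapse and the fact $M(t)\in\cM$ explicit, whereas the paper leaves these implicit.
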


\proof Since $G$ is a distance-regular, $A(G) = A_1$ in an association. The eigenvalues of $A_1$ are $\{P_{r1}\}_{r=0}^d$ and we see that 
\[
U(t) =\sum_{r=0}^d \ee^{\ii tP_{r1}} E_r, \quad \comp{U(t)} =\sum_{r=0}^d \ee^{-\ii tP_{r1}} E_r.
\]
Since we want to take a Schur product, we will write $U(t), \comp{U(t)}$ in the Schur basis  as follows: 
\[
U(t) =\sum_{r=0}^d \ee^{\ii tP_{r1}} \left(  \frac{1}{n} \sum_{s=0}^d Q_{sr} A_s \right) =   \frac{1}{n} \sum_{s=0}^d\left(\sum_{r=0}^d  \ee^{\ii tP_{r1}}  Q_{sr} \right) A_s 
\]
whence
\[
\begin{split}
 M(t)=   U(t)\circ \comp{U(t)} &= \left( \frac{1}{n} \sum_{s=0}^d\left(\sum_{r=0}^d  \ee^{\ii tP_{r1}}  Q_{sr}\right)  A_s  \right) \circ \left( \frac{1}{n} \sum_{s'=0}^d\left(\sum_{r'=0}^d  \ee^{-\ii tP_{r'1}} Q_{s'r'} \right) A_{s'}  \right) \\
    &= \frac{1}{n^2} \sum_{s=0}^d\left(\sum_{r=0}^d  \ee^{\ii tP_{r1}}  Q_{sr}\right)\left(\sum_{r'=0}^d  \ee^{-\ii tP_{r'1}} Q_{sr'} \right)  A_s \\
    &= \frac{1}{n^2} \sum_{s=0}^d\left(\sum_{r,r'}  \ee^{\ii tP_{r1}-itP_{r'1}}  Q_{sr}  Q_{sr'} \right)  A_s .
\end{split}
\]
To find the eigenvalues, we will change into the other basis. 
\[
\begin{split}
 M(t) &= \frac{1}{n^2} \sum_{s=0}^d\left(\sum_{r,r'}  \ee^{\ii tP_{r1}-\ii tP_{r'1}}  Q_{sr}  Q_{sr'} \right)  \left(\sum_{\ell = 0}^d P_{\ell s}E_{\ell} \right) \\
 &= \sum_{\ell = 0}^d \frac{1}{n^2} \left(\sum_{s,r,r'}  \ee^{\ii tP_{r1}-itP_{r'1}}  Q_{sr}  Q_{sr'}P_{\ell s}  \right)   E_{\ell} ,
\end{split}
\]
which gives the eigenvalues of $M(t)$. \qed 

The following lemma will allow us to determine if the distance regular graph has perfect state transfer between pairs of antipodal vertices from the eigenvalues of $M(t)$. 

\begin{lemma}\label{lem:J} If $M$ is a symmetric $n\times n$  matrix such that the following hold:
\begin{enumerate}[(a)]
\item $M$ is doubly stochastic, and,
\item the characteristic polynomial of $M$ is $x^{n-1}(x-1)$,
\end{enumerate}
then $M = \frac{1}{n}J$. \end{lemma}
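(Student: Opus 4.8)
The plan is to read the spectrum of $M$ off its characteristic polynomial and then combine this information with symmetry and double stochasticity to pin $M$ down completely. From hypothesis~(b), the eigenvalues of $M$ are $1$ with algebraic multiplicity $1$ and $0$ with algebraic multiplicity $n-1$. Since $M$ is (real) symmetric, it is orthogonally diagonalizable, so these algebraic multiplicities agree with the geometric ones; in particular $\rk(M) = 1$ and the $1$-eigenspace of $M$ is one-dimensional.

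Next I would bring in hypothesis~(a): a doubly stochastic matrix satisfies $M\ones = \ones$, so $\ones$ is an eigenvector of $M$ for the eigenvalue $1$. Combining this with the previous paragraph, the $1$-eigenspace must be exactly $\mathrm{span}\{\ones\}$, and the $0$-eigenspace is its orthogonal complement $\ones^{\perp}$.

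To finish, I would invoke the structure of symmetric rank-one matrices: $M = \lambda\, \Zv\Zv^{T}$ for a unit eigenvector $\Zv$ and the unique nonzero eigenvalue $\lambda$. Here $\lambda = 1$ and $\Zv = \pm\ones/\sqrt{n}$, so $M = \ones\ones^{T}/n = \tfrac1n J$. (Equivalently: $M$ is the orthogonal projection onto $\mathrm{span}\{\ones\}$, and the only such projection is $\tfrac1n J$.)

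There is essentially no obstacle here; the one point that requires care is that the argument genuinely uses symmetry, not just hypothesis~(b): without symmetry the eigenvalue $0$ could carry nontrivial Jordan blocks, $\rk(M)$ need not equal $1$, and the conclusion would fail (e.g.\ a nilpotent perturbation of $\tfrac1n J$). So the proof should explicitly note that symmetry forces diagonalizability and hence rank exactly one.
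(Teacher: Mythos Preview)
Your argument is correct. It differs from the paper's proof in the order in which the hypotheses are combined. The paper first extracts $\rk(M)=1$ from~(b), writes the rows of $M$ as scalar multiples $\alpha_j\Zv$ of a single row vector $\Zv$, then uses constant row sums from~(a) to force all $\alpha_j=1$, and finally uses symmetry entrywise ($M(1,j)=M(j,1)$) to force $\Zv$ to be constant. You instead invoke the spectral theorem immediately to get $M=\lambda\,\Zv\Zv^{T}$, and then use~(a) only to identify $\Zv$ as a multiple of $\ones$. Your route is slightly more conceptual (it recognises $M$ as the orthogonal projection onto $\mathrm{span}\{\ones\}$), while the paper's route is a bare-hands computation that never names the spectral decomposition; both are short and neither buys anything the other does not.
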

\proof 
Suppose $M$ is a $n\times n$ matrix satisfying the hypotheses in the theorem. Since $M$ has exactly one non-zero eigenvalue, then $M$ has exactly one linear independent row. We may write
\[
M= \pmat{\Zv \\ \alpha_1 \Zv \\ \vdots \\ \alpha_{n-1} \Zv }
\]
where the $\alpha_j$s are scalars in $\re$. Since row sums are constant, we have 
\[ \sum_{\ell = 1}^n \Zv_{\ell} = \alpha_j \sum_{\ell = 1}^n \Zv_{\ell}
\]
for $j = 1, \ldots, n-1$. Then $\alpha_j = 1$ for $j = 1, \ldots, n-1$.

Suppose there exists $j$ and $ \ell$ in $\{1, \ldots, n\}$ such that $\Zv_{j} \neq \Zv_{\ell}$. Since $M$ is symmetric, we have that 
\[
\Zv_j = M(1,j) = M(j,1) = \Zv_1
\]
and 
\[
\Zv_{\ell} = M(1,\ell) = M(\ell,1) = \Zv_1 = \Zv_j,
\]
a contradiction. Thus $\Zv$ is a constant vector. Since $\Zv$ has non-negative entries and sums to 1, we have that 
$M = \frac{1}{n} J$ as required.\qed 

If we combine this with the previous lemma, we obtain a necessary and sufficient condition for uniform mixing, using the parameters of the graph. 

\begin{corollary}\label{cor:um} If $A(G)$ is a relation of an association scheme, then $G$ admits uniform mixing at time $t$ if and only if 
\[ 
\frac{1}{n^2} \left(\sum_{s,r,r'}  \ee^{\ii tP_{r1}-\ii tP_{r'1}}  Q_{sr}  Q_{sr'}P_{\ell s}  \right) = \begin{cases}
     1, &\text{ if } \ell = 0; \\
     0, &\text{otherwise}. 
 \end{cases}
 \]
\end{corollary}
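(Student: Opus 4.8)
The plan is to combine Lemma~\ref{lem:eigs} with Lemma~\ref{lem:J}, the bridge being the observation that $M(t)$ lies in the Bose--Mesner algebra $\cM$ of the scheme. The computation in the proof of Lemma~\ref{lem:eigs} already shows that $U(t)$ and $\comp{U(t)}$ are linear combinations of the $A_s$, hence lie in $\cM$, and $\cM$ is closed under the Schur product, so $M(t)\in\cM$. Consequently $M(t)$ is simultaneously diagonalized by the primitive idempotents $E_0,\dots,E_d$, its eigenvalues are exactly the scalars
\[
\mu_\ell(t)=\frac{1}{n^2}\left(\sum_{s,r,r'}\ee^{\ii tP_{r1}-\ii tP_{r'1}}Q_{sr}Q_{sr'}P_{\ell s}\right),
\]
each with multiplicity $\rk(E_\ell)$, and the expansion $M(t)=\sum_{\ell}\mu_\ell(t)E_\ell$ is unique because the $E_\ell$ are linearly independent. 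I would also record the elementary reduction: $G$ admits uniform mixing at time $t$ if and only if $M(t)=\tfrac1nJ$, since by definition every entry of $M(t)$ is equal, and double stochasticity forces the common value to be $\tfrac1n$; and $\tfrac1nJ=E_0$.

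For the forward direction, suppose $G$ admits uniform mixing at time $t$, so $M(t)=E_0$. Comparing this with the unique expansion $M(t)=\sum_\ell\mu_\ell(t)E_\ell$ yields $\mu_0(t)=1$ and $\mu_\ell(t)=0$ for $\ell\ge 1$, which is precisely the displayed condition.

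For the converse, suppose $\mu_\ell(t)$ equals $1$ when $\ell=0$ and $0$ otherwise. Then the eigenvalue list of $M(t)$ is $1$ with multiplicity $\rk(E_0)=1$ together with $0$ with multiplicity $n-1$, so the characteristic polynomial of $M(t)$ is $x^{n-1}(x-1)$; since $M(t)$ is symmetric and doubly stochastic, Lemma~\ref{lem:J} gives $M(t)=\tfrac1nJ$, i.e.\ uniform mixing. (Alternatively, one can bypass Lemma~\ref{lem:J} entirely and argue directly that $M(t)=\sum_\ell\mu_\ell(t)E_\ell=E_0=\tfrac1nJ$.)

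There is no genuine obstacle here once Lemma~\ref{lem:eigs} is in hand; the only points requiring care are (i) justifying that $M(t)\in\cM$, so that the coefficients produced by Lemma~\ref{lem:eigs} really are the eigenvalues and the expansion in the $E_\ell$ is unique — this is exactly the Schur-closure of $\cM$ used inside that lemma — and (ii) noting that the $\ell=0$ equation is automatically satisfied for every $t$, since $M(t)$ is doubly stochastic forces $M(t)\ones=\ones$ and hence $\mu_0(t)=1$; thus the real content of the criterion is the vanishing of $\mu_\ell(t)$ for $\ell\ge 1$. If one wants the statement for an arbitrary relation $A_j$ of the scheme rather than $A_1$, the identical argument applies after replacing $P_{r1}$ by $P_{rj}$ throughout.
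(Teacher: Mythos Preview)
Your proof is correct and follows exactly the route the paper indicates: the paper's own proof reads in its entirety ``Follow directly from Lemmas~\ref{lem:eigs} and~\ref{lem:J},'' and you have filled in precisely those details. Your additional remarks (that $\mu_0(t)=1$ is automatic, that Lemma~\ref{lem:J} can be bypassed via the direct expansion $M(t)=\sum_\ell\mu_\ell(t)E_\ell=E_0$, and the extension to $A_j$) are correct and go slightly beyond what the paper records.
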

\begin{proof} Follow directly from Lemmas \ref{lem:eigs} and \ref{lem:J}.\qed
\end{proof}

Though this appears as if it would be difficult to use, however, it does give us a way to rule out uniform mixing for some classes of graphs. For a Hadamard graph:
\[P=Q=\pmat{
  1 & n & 2 n -2 & n & 1 \\
1 &\sqrt{n}& 0&-\sqrt{n}&-1 \\
1 & 0&-2&0&1\\
1& - \sqrt{n} & 0& \sqrt{n}&-1 \\
1  & -n  & 2 n -2& -n & 1}.\]

If $n$ is a square, the  Hadamard graph of order $n$ has perfect state transfer at time  $t = \frac{\pi}{\sqrt{n}}$. (If $n$ is not a square, then the graph has non-integral eigenvalues.) 

\begin{lemma} The Hadamard graph of order $n$ has uniform mixing at some time $t$  if and only if $n =4$. \end{lemma}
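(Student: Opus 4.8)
The plan is to feed the explicit eigenmatrix $P=Q$ displayed above into the uniform mixing criterion (Corollary~\ref{cor:um}, or equivalently the Schur-basis formula established inside the proof of Lemma~\ref{lem:eigs}), thereby turning ``uniform mixing at some time'' into a small system of trigonometric equations in $t$, and then to show this system is consistent exactly when $n=4$, for which I will exhibit an explicit time.

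First I would set up notation. Let $N=4n$ be the number of vertices (it is the sum of the first row of $P$). By the computation in the proof of Lemma~\ref{lem:eigs}, $U(t)$ is constant on each relation $A_s$ of the scheme, with value $\tfrac1N\sum_{r=0}^{4}\ee^{\ii tP_{r1}}Q_{sr}$, where the eigenvalues $P_{r1}$ are the entries $n,\sqrt n,0,-\sqrt n,-n$ of column~$1$ of $P$. Since uniform mixing forces every entry of the doubly stochastic matrix $M(t)=U(t)\circ\comp{U(t)}$ to equal $1/N$, the graph has uniform mixing at $t$ if and only if
\[
\Bigl|\,\ee^{\ii tn}Q_{s0}+\ee^{\ii t\sqrt n}Q_{s1}+Q_{s2}+\ee^{-\ii t\sqrt n}Q_{s3}+\ee^{-\ii tn}Q_{s4}\,\Bigr|^{2}=N\qquad(s=0,\dots,4).
\]
Writing $a=\cos(tn)$, $b=\cos(t\sqrt n)$, $p=\sin(tn)$, $q=\sin(t\sqrt n)$ and reading the rows of $Q=P$ off the matrix (rows $0,2,4$ are palindromes, rows $1,3$ anti-palindromes, as is column~$1$), these five equations simplify to $(a-1)^2=n$ from $s=2$; to $\bigl(p+\sqrt n\,q\bigr)^{2}=n=\bigl(p-\sqrt n\,q\bigr)^{2}$ from $s=1,3$; and to $\bigl(a+nb+n-1\bigr)^{2}=n=\bigl(a-nb+n-1\bigr)^{2}$ from $s=0,4$.

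For the forward implication I would only use the first three. The equation $(a-1)^{2}=n$ with $|a|\le 1$ already forces $n\le 4$. Subtracting the two $s\in\{1,3\}$ equations gives $\sqrt n\,pq=0$, and adding them gives $p^{2}+nq^{2}=n$. If $p=\sin(tn)\neq 0$, then $q=\sin(t\sqrt n)=0$, whence $p^{2}=n\le 1$, so $n=1$; but $n=1$ makes $\sqrt n=n$, hence $q=\sin(t\sqrt n)=\sin(tn)=p\neq 0$, a contradiction. So $p=\sin(tn)=0$; then $a=\cos(tn)\in\{1,-1\}$ and $q^{2}=1$. Now $a=1$ makes $(a-1)^{2}=n$ read $n=0$, impossible, so $a=-1$ and $n=(a-1)^{2}=4$. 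Thus $n=4$, and moreover any witnessing time must have $\cos(4t)=-1$ and $\sin^{2}(2t)=1$, which is consistent (these hold simultaneously, e.g.\ at $t=\pi/4$).

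For the converse, with $n=4$ I would take $t=\pi/4$, so $a=\cos\pi=-1$, $b=\cos(\pi/2)=0$, $p=\sin\pi=0$, $q=\sin(\pi/2)=1$, and substitute into all five equations; each reduces to $16=16$, so the Hadamard graph of order $4$ does admit uniform mixing (at $t=\pi/4$).

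I do not expect a conceptual obstacle: once the mixing criterion is applied, the problem is a short finite computation. The step most prone to error rather than difficulty is the translation in the second paragraph — in particular, using $N=4n$ for the number of vertices (hence the right-hand side $N$) and correctly identifying the eigenvalue column of $P$. The one structural remark that makes the argument short is that the row of $Q$ indexed by the ``diameter'' relation $A_2$ involves only the two extreme eigenvalues $\pm n$, and the resulting equation $(\cos(tn)-1)^{2}=n$ is exactly what pins $n\le 4$; the degenerate small orders (namely $n=1$, whose scheme is that of $2K_2$, and $n=2$, whose scheme is that of $C_8$) are then killed by the same three equations, as above.
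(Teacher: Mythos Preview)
Your argument is correct. It follows a genuinely different route from the paper's. The paper applies Corollary~\ref{cor:um} to the eigenvalue $\lambda_2$ of $M(t)$, obtains the single trigonometric identity
\[
4\bigl(\cos(2\sqrt{n}\,t)+3\bigr)=\tfrac{16}{n}\bigl(1-\cos(nt)\bigr),
\]
and bounds both sides to force $n\le 4$; the small cases $n\in\{1,2,3\}$ are then dispatched by an external check. You instead stay in the Schur basis and read off the five conditions $\lvert\sum_r \ee^{\ii tP_{r1}}Q_{sr}\rvert^2=N$ directly from the definition of uniform mixing. The $s=2$ equation $(\cos(tn)-1)^2=n$ gives $n\le 4$ just as cleanly, and your use of the $s\in\{1,3\}$ equations then \emph{forces} $\cos(tn)=-1$, hence $n=4$, with no residual case analysis. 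This is more elementary (it bypasses Lemma~\ref{lem:J} and Corollary~\ref{cor:um} entirely) and slightly sharper; the paper's approach, on the other hand, showcases Corollary~\ref{cor:um} as a reusable tool for association schemes.

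Two cosmetic remarks. First, in your converse paragraph the five equations you wrote have right-hand side $n$, so with $n=4$ they read $4=4$ rather than $16=16$; the latter is the unsimplified form with right-hand side $N$. Second, your witnessing time $t=\pi/4$ agrees with the standard uniform-mixing time for $Q_4$ (the paper's displayed time $\pi/\sqrt{n}=\pi/2$ is the perfect-state-transfer time, not the uniform-mixing time).
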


\proof We will look at the $\ell=2$ eigenvalue of $M(t)$ for any time $t$.  We get 
\[
\begin{split}
\lambda_2 &= \frac{1}{n^2} \left( 2  n^{2} \ee^{\left(2 \ii   \sqrt{n} t\right)} + 2  n^{2} \ee^{\left(-2 \ii   \sqrt{n} t\right)} + 12  n^{2} + 8  n \ee^{\left(\ii   n t\right)} + 8  n \ee^{\left(-\ii   n t\right)} - 16  n \right) \\
 &= 2  \ee^{\left(2 \ii   \sqrt{n} t\right)} + 2 \ee^{\left(-2 \ii   \sqrt{n} t\right)} + 12  + \frac{8}{n}   \ee^{\left(\ii   n t\right)} + \frac{8}{n} \ee^{\left(-\ii   n t\right)} - \frac{16}{n}  
\end{split}
\]
If $\lambda_2= 0$, then we have 
\[\begin{split}
2 \left( \ee^{\left(2 \ii   \sqrt{n} t\right)} + \ee^{\left(-2 \ii   \sqrt{n} t\right)} \right) + \frac{8}{n}  \left( \ee^{\left(\ii   n t\right)} +  \ee^{\left(-\ii   n t\right)}\right) &=  \frac{16}{n} -12 \\
4 \left( \cos (2  \sqrt{n}t )  \right) + \frac{16}{n}  \left( \cos \left(n t\right)\right) &=  \frac{16}{n} -12 \\
4 \cos (2  \sqrt{n}t )  + 12  &=  \frac{16}{n} -\frac{16}{n}  \cos \left(n t\right) \\
4 \left( \cos (2  \sqrt{n}t )  + 3\right)  &=  \frac{16}{n} \left( 1 -  \cos \left(n t\right) \right). \\
\end{split}
\]
We see that the left hand side is at least 8 for any $n,t$. Then 
\[
8\leq \frac{16}{n}(1-\cos(nt) ) \leq \frac{32}{n}
\]
from whence we obtain that $n \leq 4$. We can verify that the Hadamard graph with $n =2$ does not have uniform mixing but the Hadamard graph with $n=4$ does uniform mixing at time $t = \frac{\pi}{\sqrt{n}}$, since it is isomorphic to the hypercube on $16$ vertices. \qed

This leads us to our next open problem. Like the Hadamard graph, $C_9$ is also a formally self-dual distance-regular graph of diameter $4$. Unfortunately, it seems that the application of Corollary \ref{cor:um} is much more diffcult in this case. 

\begin{openprob}
   Determine if  $C_9$ admits uniform mixing. 
\end{openprob}

 We note that it is possible that  $C_9$ admits uniform mixing but the eigenvalues of $U^{\tau}$ are not roots of unity. The open problem first appeared in \cite{GodsilMullinRoy} and in the thesis \cite{NatalieThesis} of one of the authors, Natalie Mullin. Their work shows that uniform mixing does not occur on even cycle and nor on any cycle of prime length. 

A major drawback of Corollary \ref{cor:um} is that, unlike the characterisation of perfect state transfer, it is not independent of the time of uniform mixing. For the Hadamard graphs, we are able to show that no time $t$ exists which satisfy the conditions of Corollary \ref{cor:um}, except when $n=4$. In general, it would be enlightening to give a characterisation of instantaneous uniform mixing which does not depend on the time. 

\begin{openprob}
  Given the eigenvalues and eigenspaces of a graph, give necessary and sufficient conditions for uniform mixing to occur. 
\end{openprob}

We note that such a characterisation is likely difficult and related to the decidability of uniform mixing. It would be useful even in classes of graphs, like the class of distance-regular or strongly regular graphs. 


\section{Average mixing matrix}

Recall that
\[
M(t) = U(t) \circ U(-t)
\]
is doubly-stochastic. Its rows (and columns) are probability distributions, and each of its entries indicates the probability that certain special separable states evolve to others within the quantum walk. The typical behaviour of a quantum walk can therefore be inferred by taking averages of the matrices $M(t)$ for varying values of $t$. The uniform average on the interval $[0,T]$ is given by
\[
\widehat{M}_{[0,T]} = \frac{1}{T} \int_{0}^T M(t) \ \textrm{d}t.
\]
It is quite well-known that if $T \to \infty$, this average converges to a nice, special matrix, whose connection to the combinatorics of the graph has lead to unexpected results.

The average mixing matrix is defined as
\[
    \widehat{M} = \lim_{T \to\infty} \widehat{M}_{[0,T]},
\]
and a straightforward calculation shows that if $A(G) = \sum_{r = 0}^d \theta_r E_r$, then
\[
\widehat{M} = \sum_{r = 0}^d E_r \circ E_r.
\]
Averages of probability distribution coming from the rows of $M(t)$ were investigated in hypercubes in \cite{moore2002quantum}, complete graphs in \cite{adamczak2003note} and circulants in \cite{adamczak2007non}. Godsil \cite{GodsilAverageMixing} introduced the basic theory of $\widehat{M}$, and the connection of $\widehat{M}$ and average states and the commutant algebra of $A(G)$ was developed in \cite{CouGodGuo2018}.

As the theory of the average mixing matrix is relatively new, there are many interesting problems one can ask. We discuss a few below.

We begin with a problem that remotes to the topic of the previous section:

\begin{openprob}
    If $G$ is a primitive distance-regular graph, show that the rank of $\mxm$ is equal to the number of vertices.
\end{openprob}

\subsection{Different takes on the positivity of \texorpdfstring{$\widehat{M}$}{M}} 

It is quite clear that $\widehat{M}$ has nonnegative entries and is positive semidefinite. It was shown in \cite{CouGodGuo2018} that $\widehat{M}$ is in fact completely positive semidefinite, meaning, it is the Gram matrix of positive semidefinite matrices. In each of these cones of matrices, it is natural and often very useful to find the rank within the cone of a given matrix.

The non-negative rank of a non-negative $n\times n$ matrix $M$ is the least number $k$, such that there are $k$  matrices $\{M_r\}_{r=1}^k$ of rank 1 with non-negative entries, such that $M = \sum_{r=1}^k M_r$. 

The completely positive semidefinite rank (cpsd-rank) of a completely positive semidefinite matrix $M$ is the smallest dimension $d$ so that $M$ is the Gram matrix of positive semidefinite matrices of size $d\times d$.

The result in \cite{CouGodGuo2018} shows that the cpsd-rank of $\widehat{M}$ is at most $n$, and it is well-known that the non-negative rank of a non-negative $n\times n$ matrix is at most $n$. The paper \cite{CouGodGuo2018} also develops theory connecting the (usual) rank of $\widehat{M}$ and other properties of the graph, for example, there it is shown that if $G$ has simple eigenvalues and at least three vertices, then the rank of $\widehat{M}$ is at most $n-1$, with equality implying that all automorphisms of $G$ have fixed points. 

\begin{openprob}
    When is the non-negative rank of $\mxm$ is equal to $n$? 
\end{openprob}

\begin{openprob}
    Compute the cpsd-rank of $\mxm$.
\end{openprob}

\begin{openprob}
    In the extremal cases for the non-negative or cpsd ranks, what combinatorial consequences may be derived?
\end{openprob}

The matrix $\mxm$ is almost always completely positive. In other words, it is a Gram matrix of non-negative vectors.

\begin{lemma}
    If $G$ has simple eigenvalues, then $\widehat{M}$ is completely positive. 
\end{lemma}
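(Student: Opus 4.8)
The plan is to exploit the hypothesis of simple eigenvalues to write each spectral idempotent $E_r$ as a rank-one matrix, and then feed this into the identity $\widehat{M} = \sum_{r=0}^d E_r \circ E_r$ recorded just above. Recall that a symmetric matrix is \emph{completely positive} precisely when it is the Gram matrix of nonnegative vectors, equivalently when it factors as $BB^T$ with $B$ entrywise nonnegative; this is exactly the shape the computation will produce.

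First I would observe that since $A = A(G)$ is a real symmetric matrix, and here all eigenvalues $\theta_r$ are simple, each eigenspace is one-dimensional and spanned by a real unit vector $v_r$, so that $E_r = v_r v_r^T$. Next I would compute the Schur square: the $(i,j)$ entry of $E_r \circ E_r$ equals $(v_r)_i^2 (v_r)_j^2$, hence $E_r \circ E_r = w_r w_r^T$ where $w_r \in \re^n$ is the vector with entries $(w_r)_i = (v_r)_i^2$, which are all nonnegative. Finally, summing over $r$,
\[
\widehat{M} = \sum_{r=0}^d w_r w_r^T = W W^T,
\]
where $W$ is the $n \times (d+1)$ matrix whose columns are $w_0, \dots, w_d$. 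Since every entry of $W$ is nonnegative and (as the eigenvalues are simple) $d + 1 = n$, this exhibits $\widehat{M}$ as the Gram matrix of the $n$ nonnegative vectors given by the rows of $W$, so $\widehat{M}$ is completely positive; moreover its cp-rank is at most $n$.

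There is essentially no obstacle here: the argument is a two-line computation once the idempotents are written as $v_r v_r^T$. The only point that needs a word of justification is that the eigenvectors $v_r$ may be taken to be real, which is immediate from $A$ being a real symmetric matrix. It is worth remarking that the same scheme shows why the ``almost always'' qualifier in the statement is necessary: if some eigenvalue has multiplicity greater than one, then the corresponding $E_r$ has rank at least two and $E_r \circ E_r$ need no longer be a nonnegative rank-one matrix, so the factorization above breaks down.
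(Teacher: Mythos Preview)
Your proof is correct and follows essentially the same approach as the paper: write each $E_r = v_r v_r^T$ using simplicity of the eigenvalues, observe that $E_r \circ E_r = (v_r \circ v_r)(v_r \circ v_r)^T$ has nonnegative rank-one factors, and sum. Your vector $w_r$ is exactly the paper's $v_r \circ v_r$, so the arguments coincide line for line.
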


\proof If the $\theta_r$-eigenspace has dimension $1$, then $E_r = \Zv_r \Zv_r^T$. Then we have 
\[
\widehat{M} = \sum_{r=0}^d E_r\circ E_r = \sum_{r=0}^d (\Zv_r \Zv_r^T)\circ(\Zv_r \Zv_r^T) = \sum_{r=0}^d (\Zv_r \circ\Zv_r)(\Zv_r \circ \Zv_r)^T.
\]
Since the entries of $\Zv_r\circ \Zv_r$ are square, they are non-negative and this gives a non-negative factorization of $\widehat{M}$.\qed

The completely positive rank (cp-rank) of a completely positive matrix $M$ is the smallest dimension $d$ for which $M$ is a Gram matrix of non-negative vectors in $\re^d$.

Since the rank of the average mixing matrix of a graph with simple eigenvalues is at most $n-1$, this may be a situation where it is possible to find the cp-rank or to characterise those whose cp-rank is equal to the number of vertices.

\begin{openprob}
    When is the cp-rank of $\mxm$ is equal to $n$? 
\end{openprob}

\subsection{Rank, trace, eigenvalues, and alternative averages}

The average mixing matrix is computed from the uniform average over intervals. There are other probability distributions for which the average of the mixing matrices lead to interesting phenomena. If $R$ is a random variable with probability density function $f_R(t): \mathbb{R} \xrightarrow{} [0, \infty)$, then the \emph{average mixing matrix under $R$} is defined as
\[\widehat{M}_{R} := \mathbb{E}[M(R)] = \int_{-\infty}^{\infty} M(t)f_R(t) \ \mathrm{d}t.\]
The recent pre-print \cite{baptista2023unexpected} explores this idea, and among other things, show that for the paths $P_3$ and $P_4$ there are choices of $R$ for which $\widehat{M}_{R}$ has rank $1$ (something that never happens for $\widehat{M}$ unless the graph is $P_2$). The natural question of whether this is also possible for $P_n$, $n \geq 5$, remains open, however here we point to another line of investigation.

Rank, diagonal entries, trace, eigenvalues, all of these very natural linear algebraic properties of a matrix have been studied for $\widehat{M}$ \cite{CouGodGuo2018,GodGuoSin2018,GodGuoSob2019}. Proposition 6 in \cite{baptista2023unexpected} provides absolute upper and lower bounds for these parameters in terms of those of $\widehat{M}$, for any $R$. Thus we propose the very open ended line of research:

\begin{openprob}
    Develop a spectral graph theory for the matrix $\widehat{M}_{R}$, $R$ an arbitrary random variable.
\end{openprob}

We point out that $R$ can always be chosen so that $\widehat{M}_{R} = M(\tau)$ for a fixed $\tau \in \re$, so developments to this theory above will address questions about state transfer or uniform mixing, so one should not be too ambitious. On the other hand, tools from probability theory should be useful, and have not been heavily explored in the context of this interplay between graph theory and quantum walks.

\section*{Acknowledgements}

Gabriel Coutinho acknowledges the support from CNPq. The collaboration of the two authors is supported by FAPEMIG grant APQ-04481-22. Krystal Guo acknowledges the support from FAPERJ.
Both authors would like to express our gratitude to Nair Abreu for her invaluable contributions to the spectral graph theory, and to the editors of this special issue for providing us with the opportunity to contribute to the celebration of Nair Abreu's birthday.

\printbibliography

\end{document}